\newcommand{\cT}{{\mathcal T}}
\newcommand{\cF}{{\mathcal F}}
\newcommand{\ve}{\varepsilon}
\newcommand{\mbR}{{\mathbb R}}
\newcommand{\mbN}{{\mathbb N}}
\theoremstyle{plain}
\newtheorem{thm}{Theorem}[section]
\newtheorem{lem}{Lemma}[section]
\theoremstyle{definition}
\newtheorem{defn}{Definition}
[section]
\newtheorem{expl}{Example}
[section]
\theoremstyle{remark}
\theoremstyle{Corollary}
\newtheorem{cor}{Corollary}[section]
\theoremstyle{Proposition}
\begin{document}
\begin{center}
{\bf \Large{Hilbert-valued self-intersection local times for planar Brownian motion}}
\end{center}
\large
\begin{center}
Andrey Dorogovtsev, adoro@imath.kiev.ua\\
Olga Izyumtseva, olgaizyumtseva@gmail.com
\end{center}
{\bf Abstract}

In the paper E.B. Dynkin construction for self-intersection local time of planar Wiener process is extended on Hilbert-valued weights. In Dynkin construction the weight is bounded and measurable. Since the weight function can describe the properties of the media in which the Brownian motion moves, then relatively to the external media properties the weight function can be random and unbounded. In this article we discuss the possibility to consider the Hilbert-valued weights. It appears that the existence of Hilbert-valued renormalized by Dynkin self-intersection local time is equivalent to the embedding of the values of Hilbert-valued weight into Hilbert-Shmidt brick. Using A.A. Dorogovtsev sufficient condition of the embedding of compact sets into Hilbert-Shmidt brick in terms of isonormal process we prove the existence of Hilbert-valued renormalized by Dynkin self-intersection local time. Also using Dynkin construction we construct the self-intersection local time for the deterministic image of planar Wiener process.

{\bf keywords:} self-intersection local time, Dynkin renormalization, isonormal process, Hilbert-Shmidt brick.

60G15, 60J55, 60J65
\section{Introduction}
\label{Introduction}
The construction of renormalized self-intersection local time for planar Brownian motion was proposed by E.B. Dynkin in \cite{1}. Due to the articles of J.-F. Le Gall \cite{12} and A.S. Sznitman \cite{13} the renormalized self-intersection local times obtained geometrical meaning as the coefficients in asymptotic expansion of the area of planar Wiener sausage of small radius. In Dynkin construction the weight function $\rho$ was involved under assumption that this function is bounded and measurable. In this article we discuss the possibility to consider the Hilbert-valued weights. To give the motivation for such extension let us recall the main elements of Dynkin construction.
Let $w(t),\ t\in [0;1]$ be a planar Wiener process. Put
$$
\Delta_k=\{0\leq t_1\leq\ldots\leq t_k\leq 1\},
$$
$$
f_\ve(y)=\frac{1}{2\pi\ve}e^{-\frac{\|y\|^2}{2\ve}}, \ \ve>0, \ y\in\mbR^2.
$$
For bounded measurable weight function $\rho:\mbR^2\to\mbR$ define
$$
T^w_{\varepsilon,k}(\rho):=\int_{\Delta_k}\rho(w(t_1))\prod^{k-1}_{i=1}f_\ve(w(t_{i+1})-w(t_i))d\vec{t},
$$
and
$$
\cT^{w}_{\ve, k}(\rho)=\sum^k_{l=1}C^{l-1}_{k-1}
\left(
\frac{1}{2\pi}\ln\ve\right)^{k-l}
\int_{\Delta_l}\rho(w(t_1))\prod^{l-1}_{i=1}
f_\ve(w(t_{i+1})-w(t_i))d\vec{t}.
$$
E.B. Dynkin proved the following theorem.
\begin{thm}
\label{thm1.1}\cite{1} There exists the random variable $\cT^{w}_k(\rho)$ such that for any $p>0$
$$
\cT^{w}_k(\rho):=L_p\mbox{-}\lim_{\ve\to0}\cT^{w}_{\ve, k}(\rho).
$$
\end{thm}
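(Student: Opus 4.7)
The plan is to prove $L_2$-Cauchyness of $\{\cT^w_{\ve,k}(\rho)\}_{\ve>0}$ first, then bootstrap to convergence in $L_p$ for all $p>0$ via Gaussian hypercontractivity. The key remark enabling the second step is that after applying the Fourier representation $f_\ve(y)=(2\pi)^{-2}\int_{\mbR^2}e^{i\lg u,y\rg-\ve\|u\|^2/2}du$, the expression $\cT^w_{\ve,k}(\rho)$ becomes a sum of iterated Wiener integrals of order at most $k$ (multiplied by the bounded factor $\rho(w(t_1))$, which one handles by first approximating $\rho$ with smooth test functions and later removing the smoothing by dominated convergence using $\|\rho\|_\infty<\infty$). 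On a finite sum of Wiener chaoses all $L_p$ norms are equivalent, so the upgrade from $L_2$ to $L_p$ is automatic.

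For the $L_2$ step I would compute $\mathbb{E}[\cT^w_{\ve_1,k}(\rho)\cT^w_{\ve_2,k}(\rho)]$ via Fubini and the elementary Gaussian identity $\mathbb{E}f_\ve(w(t)-w(s))=\frac{1}{2\pi(|t-s|+\ve)}$, together with its multivariate analogue obtained by completing the square in a product of Gaussian densities. This reduces the problem to a deterministic double sum over $l_1,l_2\in\{1,\dots,k\}$ of integrals of the form
$$
C^{l_1-1}_{k-1}C^{l_2-1}_{k-1}\bigl(\tfrac{1}{2\pi}\ln\ve_1\bigr)^{k-l_1}\bigl(\tfrac{1}{2\pi}\ln\ve_2\bigr)^{k-l_2}\!\!\int_{\Delta_{l_1}\times\Delta_{l_2}}\!K_{\ve_1,\ve_2}(\vec s,\vec t)\,G_\rho(s_1,t_1)\,d\vec s\,d\vec t,
$$
where $K_{\ve_1,\ve_2}$ is a product of factors of type $\frac{1}{2\pi(|s_{i+1}-s_i|+\ve_1)}$ and the analogous ones in $\vec t$, and $G_\rho$ encodes the joint expectation of $\rho(w(s_1))\rho(w(t_1))$.

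The heart of the argument is the combinatorial cancellation of divergences. One splits each simplex $\Delta_l$ into subdomains indexed by which of the $l-1$ consecutive gaps $t_{i+1}-t_i$ are ``short'' and which are ``long''. A short gap integrates to $\frac{1}{2\pi}\log(1/\ve)+O(1)$ by the elementary estimate $\int_0^T(t+\ve)^{-1}dt=\log(1/\ve)+O(1)$; summing over gap patterns, the binomial $C^{l-1}_{k-1}$ together with the counterterm $(\tfrac{1}{2\pi}\ln\ve)^{k-l}$ is exactly what is required for contributions labeled by different $l$ to telescope and for all divergent powers of $\log\ve$ to disappear. The surviving finite part is expressible through the logarithmic Green kernel of $-\Delta$ on $\mbR^2$ and depends continuously on the regularization, so passing $\ve_1,\ve_2\to 0$ gives a common limit.

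The main obstacle is the uniform control of the remainders in this cancellation: one must track the joint dependence on $\ve_1$ and $\ve_2$, and dominate the ``finite'' part of each subdomain integral by an integrable function of $\vec s,\vec t$ independent of $\ve_1,\ve_2$. Boundedness of $\rho$ and iterated estimates of the form $\int_{\Delta_l}\prod_{i}(|s_{i+1}-s_i|+\ve)^{-1+\eta}\,d\vec s<\infty$ for small $\eta>0$ are the essential deterministic inputs; everything else is combinatorial bookkeeping with binomial identities.
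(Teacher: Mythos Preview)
The paper does not actually prove Theorem~\ref{thm1.1}: it is quoted verbatim as Dynkin's result from \cite{1}, so there is no in-paper argument to compare against. Your $L_2$ sketch is in the spirit of Dynkin's original computation (expand the second moment via Gaussian density identities, isolate the $\log\ve$ contributions from short gaps, and observe that the binomial weights $C^{l-1}_{k-1}$ are precisely what make the divergent terms telescope), and the obstacles you flag---uniform domination of remainders and the bookkeeping over gap patterns---are the real ones.

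The bootstrap from $L_2$ to $L_p$ via hypercontractivity, however, does not work as written. Your claim that the Fourier representation turns $\cT^w_{\ve,k}(\rho)$ into ``a sum of iterated Wiener integrals of order at most $k$'' is false: writing $f_\ve(y)=(2\pi)^{-2}\int e^{i\langle u,y\rangle-\ve\|u\|^2/2}\,du$ expresses $\prod_i f_\ve(w(t_{i+1})-w(t_i))$ as a superposition of complex exponentials $e^{iZ}$ with $Z$ a linear functional of $w$, and such Wick exponentials have nonzero projection on \emph{every} chaos. Likewise $\rho(w(t_1))$, even for smooth $\rho$, is a nonlinear functional of a Gaussian and does not live in a fixed finite chaos, so approximating $\rho$ by test functions does not put you in a finite chaos either. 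Hence the equivalence of $L_p$ norms on a fixed chaos sum is unavailable. Dynkin's route to $L_p$ convergence is instead to redo the moment calculation directly for each even $p$: expand $\bigl(\cT^w_{\ve_1,k}(\rho)-\cT^w_{\ve_2,k}(\rho)\bigr)^p$, compute the Gaussian expectation as a sum over diagrams on $\Delta_{l_1}\times\cdots\times\Delta_{l_p}$, and verify that the same binomial cancellation annihilates the $\log\ve$ divergences diagram by diagram. This is heavier combinatorics than the $p=2$ case but structurally identical; no hypercontractivity is invoked.
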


\begin{defn}
\label{defn1.1}
The random variable $\cT^{w}_k(\rho)$ is said to be renormalized by Dynkin $k$-multiple self-intersection local time for $w$.
\end{defn}
One of the possible motivations for the introduction of the weight function $\rho$ is the following. The function $\rho$ can describe the properties of the media in which the Brownian motion moves. In particular, it will be shown in Section 3 of this article that the renormalization of the self-intersection local time for the process $F(w(t)),\ t\in[0;1]$ can be reduced to the renormalization of self-intersection local time for the process $w$ with the weight function
$$
\rho(u)=\frac{1}{|\det F^{\prime}(u)|^{k-1}},\ u\in\mbR^2.
$$
Let $x$   be a diffusion process in $\mbR^2$  described by the stochastic
differential equation
$$
\begin{cases}
dx(t)=a(x(t))ds+B(x(t))dw(t),\\
x(0)=x_0.
\end{cases}
$$
where the coefficients $a$  and $B$  are the jointly Lipschitz
functions and
$$
m_1I<B^*B<m_2I,
$$
with some positive constants $m_1, m_2$. In the article \cite{14} it was shown that
\begin{equation}
\label{eq1.1}
E \int_{\Delta_2}f_\ve(x(t_2)-x(t_1))d\vec{t}\sim\frac{1}{2\pi}\ln\frac{1}{\ve}\ E\int^1_0 \frac{1}{|\det
B(x(t))|}dt, \ \ve\to0+.
\end{equation}
This means that the renormalization of self-intersection local time for $x$ will involve the weight function
$$
\rho(u)=\frac{1}{|\det B(u)|^{k-1}},\ u\in\mbR^2.
$$
Another proof of this hypothesis can be found in \cite{13}, where the first term of the asymptotic expansion of the area of small diffusion sausage is exactly the same as in \eqref{eq1.1}. Relatively to the external media properties the weight function $\rho$ can be random and unbounded. In order to extend Dynkin construction on this case we suppose that $\rho$ and $w$ are independent, i.e. the function $\rho$ is defined on another probability space and square integrable. Therefore, $\rho$ can be considered as the function from $\mbR^2$ to $L_2(\Omega, \cF, P).$ Hence, $\rho$ can be treated as the Hilbert-valued function. Section 2 is devoted to this abstract case. It occurs that possibility to define the renormalization for Hilbert-valued $\rho$ is related to the question: How big is the set of values of the function $\rho$ ? We present an approach based on the conditions for compact set to be covered by Hilbert-Shmidt brick \cite{5}. It allows to consider the random weight functions with unbounded trajectories which essentially enlarge the field of application of Dynkin construction.
\section{Hilbert-valued self-intersection local times for planar Brownian motion}
In this Section we extend the action of operator $\cT^w_{k}$ on the Hilbert-valued functions $\rho.$ To do this we will reformulate Dynkin result in terms of the functional analysis. Consider
the family of linear operators from $L_\infty(\mbR^2)$ to $L_2(\Omega, \cF, P)$ which act by the rule
$$
\cT^w_{\ve, k}(\rho)=\sum^k_{l=1}C^{l-1}_{k-1}\Big(\frac{1}{2\pi}\ln\ve\Big)^{k-l}
\int_{\Delta_l}\rho(w(t_1))
\prod^{l-1}_{i=1}f_\ve(w(t_{i+1})-w(t_i))d\vec{t}.
$$
One can check that for any $k\geq1,\ 0<\ve<1$
$$
E\Big|
\sum^k_{l=1}C^{l-1}_{k-1}\Big(\frac{1}{2\pi}\ln\ve\Big)^{k-l}
\int_{\Delta_l}\rho(w(t_1))\cdot
$$
$$
\cdot
\prod^{l-1}_{i=1}f_\ve(w(t_{i+1})-w(t_i))d\vec{t}\Big|^2\leq
$$
\begin{equation}
\label{eq2.1}
\leq \frac{1}{\ve^2}(1+|\ln\ve|)^{2(k-1)}\|\rho\|^2_\infty.
\end{equation}
It follows from \eqref{eq2.1} that $\{\cT^w_{\ve,k},\ \ve>0\}$ is the family of continuous linear operators. Due to Theorem \ref{thm1.1} for any $\rho\in L_\infty(\mbR^2)$ and $k\geq 2$ there exists
$$
L_2\mbox{-}\lim_{\ve\to0}\cT^w_{\ve, k}(\rho)=:\cT^w_k(\rho).
$$
Therefore, the family of continuous linear operators $\{\cT^w_{\ve,k},\ \ve>0\}$
strongly converges to $\cT^w_k$ as $\ve\to0.$ Consequently,
$\cT^w_{k}: \ L_\infty(\mbR^2)\mapsto L_2(\Omega, \cF, P)$
is a continuous linear operator \cite{6}. It implies that there exists a constant $c_k>0$ such that
$$
E\cT^w_k(\rho)^2\leq
c_k\|\rho\|^2_\infty.
$$
Note that $\cT^w_k$ is not nonnegative operator. Really,
$$
E\cT^w_{ 2}(1)=\lim_{\ve\to0}E\cT^w_{\ve, 2}(1).
$$
Since
$$
E\cT^w_{\ve, 2}(1)=E\int_{\Delta_2}f_{\ve}(w(t_2)-w(t_1))d\vec{t}+\frac{1}{2\pi}\ln\ve
$$
$$
=\frac{1}{2\pi}\int_{\Delta_2} \frac{1}{t_2-t_1+\ve}d\vec{t}+\frac{1}{2\pi}\ln\ve
$$
$$
=\frac{1}{2\pi}(1+\ve)(\ln(1+\ve)-1)+\frac{1}{2\pi}\ve(\ln\ve-1)\to -\frac{1}{2\pi},\ \ve\to0,
$$
then $\cT^w_2(1)$ is not nonnegative random variable.
From now we suppose that the function $\rho$ takes its values in the real separable Hilbert space $H.$ Let $\{e_m,\ m\geq 1\}$ be the orthonormal basis in $H.$ Then one can try to define
\begin{equation}
\label{eq2.2}
\cT_k(\rho)=\sum^{\infty}_{m=1}\cT_k((\rho,e_m))e_m.
\end{equation}
To verify that the series \eqref{eq2.2} converges we have to put some conditions on the function $\rho.$ One of the possible conditions is the following.

{\bf Condition $(\ast)$}. There exists an orthonormal basis $\{e_m,\ m\geq 1\}$ in $H$ such that
$$
\sum^{\infty}_{m=1}\sup_{u\in\mbR^2}(\rho(u),e_m)^2<+\infty.
\eqno (\ast)
$$
\begin{thm}
\label{thm2.1}
Suppose that Condition $(\ast)$ holds. Then the series
$$
\cT^w_k(\rho)=\sum^{\infty}_{m=1}\cT_k(E(\rho,e_m))e_m
$$
converges in mean square in $H$.
\end{thm}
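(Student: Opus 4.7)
The plan is to show that the partial sums $S_N=\sum_{m=1}^{N}\cT^w_k((\rho,e_m))e_m$ form a Cauchy sequence in $L_2(\Omega;H)$; mean-square convergence of the series then follows from completeness. The only non-trivial input needed is the scalar continuity bound $E|\cT^w_k(f)|^2\leq c_k\|f\|^2_\infty$ for bounded measurable $f:\mbR^2\to\mbR$, which was already established above from strong convergence of the family $\{\cT^w_{\ve,k}\}_{\ve>0}$ via the Banach--Steinhaus theorem.

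First I would observe that Condition $(\ast)$ implies that each individual term $\sup_{u\in\mbR^2}(\rho(u),e_m)^2$ is finite. Consequently, for every $m\geq 1$ the map $u\mapsto(\rho(u),e_m)$ is a bounded real-valued weight, and Theorem \ref{thm1.1} produces a well-defined element $\cT^w_k((\rho,e_m))\in L_2(\Omega,\cF,P)$ with
$$
E|\cT^w_k((\rho,e_m))|^2 \;\leq\; c_k\sup_{u\in\mbR^2}(\rho(u),e_m)^2.
$$

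Next I would exploit orthonormality of $\{e_m\}$ in $H$: for $M<N$,
$$
E\|S_N-S_M\|_H^2 \;=\; \sum_{m=M+1}^{N}E|\cT^w_k((\rho,e_m))|^2 \;\leq\; c_k\sum_{m=M+1}^{N}\sup_{u\in\mbR^2}(\rho(u),e_m)^2.
$$
By Condition $(\ast)$ the right-hand side is the tail of a convergent series and therefore tends to $0$ as $M,N\to\infty$. Thus $\{S_N\}$ is Cauchy in $L_2(\Omega;H)$, the series converges in mean square to some $\cT^w_k(\rho)\in L_2(\Omega;H)$, and letting $M=0$, $N\to\infty$ yields the a priori estimate
$$
E\|\cT^w_k(\rho)\|_H^2 \;\leq\; c_k\sum_{m=1}^{\infty}\sup_{u\in\mbR^2}(\rho(u),e_m)^2.
$$

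I do not see any serious obstacle: the argument is essentially a componentwise lifting of the scalar construction, and Condition $(\ast)$ is exactly what makes the $\sup$-norm bound summable across the basis. The only point requiring care is the interpretation of $\cT^w_k$ applied to $(\rho,e_m)$: one must use that, regardless of whether $H$ is an abstract Hilbert space or the concrete $L_2$ of an auxiliary probability space as in the motivation, $(\rho(\cdot),e_m)$ is a deterministic bounded scalar function on $\mbR^2$, to which Dynkin's construction from Theorem \ref{thm1.1} directly applies.
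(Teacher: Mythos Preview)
Your proof is correct and follows essentially the same approach as the paper's: both apply the scalar bound $E|\cT^w_k((\rho,e_m))|^2\le c_k\sup_{u}(\rho(u),e_m)^2$, sum over $m$, and invoke Condition~$(\ast)$. You are simply more explicit in spelling out the Cauchy criterion via orthonormality and in recording the resulting a priori estimate, whereas the paper passes directly from $E\sum_m \cT_k((\rho,e_m))^2<\infty$ to mean-square convergence of the series.
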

\begin{proof}
Note that
$$
E\cT_k((\rho, e_m))^2\leq
c_k\ \sup_{\mbR^2}(\rho(u),e_m)^2.
$$
It follows from Condition $(\ast)$ that
\begin{equation}
\label{eq2.3}
E\sum^\infty_{m=1}\cT_k((\rho, e_m))^2<+\infty.
\end{equation}
The relation \eqref{eq2.3} implies the convergence of series
$$
\sum^{\infty}_{m=1}\cT_k((\rho, e_m))e_m
$$
in mean square in $H$.
\end{proof}
Condition $(\ast)$ means that the values of $\rho$ are contained in some Hilbert-Shmidt brick. Let us recall this notion. Suppose that $\{e_k,\  k\geq1\}$ is an orthonormal basis in $H$ and $\{\ve_k,\ k\geq1\}$ is a sequence of nonnegative numbers such that
$$
\sum^{\infty}_{k=1}{\ve_k}^2<+\infty.
$$
\begin{defn}
\label{defn3.1}
The Hilbert-Shmidt brick corresponding to the pair $(\{e_k,\ k\geq1\},\{\ve_k,\ k\geq1\})$ is the set
$$
K(\{e_k,\ k\geq 1\},\{\ve_k,\ k\geq1\})=\{x\in H:\ \forall k\geq 1:\ |(x, e_k)|\leq\ve_k \}.
$$
\end{defn}
Note that the Hilbert-Shmidt brick is a compact set. If the function $\rho$ satisfies Condition $(\ast)$, then
$$
\rho(\mbR^2)\subset K(\{e_k,\ k\geq 1\},\{\sup_{u\in\mbR^2}|E\rho(u)e_k|,\ k\geq1\}).
$$
Let us consider results related to the embedding of compact sets into  Hilbert-Shmidt brick. The sufficient condition was given by A.A. Dorogovtsev in the joint paper with M.M. Popov \cite{5} in terms of isonormal Gaussian process.
\begin{defn}
\label{defn3.2}
The centered Gaussian process $\eta(u),\ u\in A\subset H$ with the covariance function
$$
E\eta(u)\eta(v)=(u,v)
$$
is said to be an isonormal process on $A.$
\end{defn}
\begin{thm}
\label{thm3.1}\cite{5} Let $A$ be a compact set in $H.$ If the isonormal process $\eta(u),\ u\in A$ has a continuous modification on $A,$ then $A$ can be covered by Hilbert Shmidt brick.
\end{thm}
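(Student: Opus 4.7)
The plan is to combine Fernique's theorem with a finite-rank approximation of the isonormal process, converting the qualitative hypothesis (continuity of $\eta$ on $A$) into a quantitative tail bound that produces the required basis. The sole input about $\eta$ that I would use is the Fernique-type integrability $M^2:=E\sup_{u\in A}\eta(u)^2<\infty$, which is automatic for a centered continuous Gaussian process on a compact set.

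Next I would pass to the separable subspace $H_0:=\overline{\mathrm{span}(A)}$; any ONB constructed there can be extended arbitrarily to $H$. Fix any auxiliary ONB $\{h_k\}$ of $H_0$ and write $\xi_k:=\eta(h_k)$, so that the $\xi_k$ are i.i.d.\ $N(0,1)$ and $\eta(u)=\sum_k(u,h_k)\xi_k$ in $L^2(\Omega)$ for each $u\in H_0$. Letting $P_n$ be the projection onto $\mathrm{span}(h_1,\dots,h_n)$, It\^o--Nisio applied to the $C(A)$-valued Gaussian series gives
$$
\sup_{u\in A}\bigl|\eta(u)-\eta(P_n u)\bigr|\;\longrightarrow\;0 \qquad \text{a.s.\ and in }L^2(\Omega),
$$
so I may extract a subsequence $n_1<n_2<\cdots$ with $E\sup_{u\in A}|\eta(u)-\eta(P_{n_k}u)|^2\le 2^{-k}$.

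The brick is manufactured block by block. Inside each finite-dimensional increment subspace $V_k:=(P_{n_{k+1}}-P_{n_k})H_0$ I would pick an ONB adapted to the projected image $(P_{n_{k+1}}-P_{n_k})A\subset V_k$ so that the sum of squared coordinate widths in $V_k$ is bounded by a constant multiple of $E\sup_{u\in A}|\eta(u)-\eta(P_{n_k}u)|^2\le 2^{-k}$. This is the Gaussian analogue of diagonalizing a compact set by its ``inertia ellipsoid'' and is the only point where a substantive finite-dimensional Gaussian inequality is needed. Concatenating the block ONBs produces an ONB $\{e_m\}$ of $H_0$ with $\varepsilon_m:=\sup_{u\in A}|(u,e_m)|$ satisfying $\sum_m\varepsilon_m^2<\infty$ by a geometric-series argument, and by construction $A\subseteq K(\{e_m\},\{\varepsilon_m\})$.

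The main obstacle is exactly the quantitative step inside each block: for a compact set $B$ in a finite-dimensional Hilbert space one has to produce an ONB whose squared coordinate widths sum to at most $c\cdot E\sup_B\eta^2$. This amounts to exhibiting a Hilbert--Schmidt ellipsoid containing $B$ whose Hilbert--Schmidt norm is dominated by the Gaussian supremum, which for convex symmetric $B$ is a Sudakov/Chevet-type inequality (and the general case reduces to the closed convex symmetric hull, still compact, still controlled by the same supremum). Once this ``width versus Gaussian supremum'' transfer is in hand, the It\^o--Nisio step and Fernique integrability assemble the global brick by routine bookkeeping.
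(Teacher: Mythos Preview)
The paper does not prove this theorem at all: it is quoted from \cite{5} and used as a black box, so there is no ``paper's own proof'' to compare against. I can therefore only assess your outline on its own merits.

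Your architecture is sound. Fernique gives $E\sup_{A}\eta^{2}<\infty$; extending $\eta$ to an isonormal process on $H_0=\overline{\mathrm{span}}\,A$ and invoking the It\^o--Nisio theorem for Gaussian series in $C(A)$ yields $E\sup_{u\in A}\lvert\eta((I-P_{n})u)\rvert^{2}\to 0$, from which the subsequence $n_k$ with bound $2^{-k}$ is extracted. The reduction from the tail $(I-P_{n_k})A$ to the slab $B_k:=(P_{n_{k+1}}-P_{n_k})A$ requires one extra line (conditional Jensen using independence and symmetry of the two pieces), which you should state explicitly.

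The genuine gap is exactly where you flag it: the finite-dimensional ``width versus Gaussian supremum'' lemma. Calling it a ``Sudakov/Chevet-type inequality'' is too vague to count as a proof, and neither Sudakov's minoration nor Chevet's tensor inequality gives this statement directly. However, the lemma is true with constant $c=1$ and has a one-line proof that you are missing. After passing to the closed symmetric convex hull (which leaves $\sup_{B}\lvert\eta\rvert$ unchanged), write $h_{B}(e)=\sup_{v\in B}\lvert(v,e)\rvert$ and average over a Haar-random orthonormal basis $\{Oe_j\}$ of $V_k$: since each $Oe_j$ is uniform on the unit sphere and $\xi=\lVert\xi\rVert\cdot\theta$ with $\theta$ uniform on the sphere, $E\lVert\xi\rVert^{2}=\dim V_k$, one gets
\[
E_O\sum_{j}h_{B}(Oe_j)^{2}=(\dim V_k)\int_{S}h_{B}(\theta)^{2}\,d\sigma(\theta)=E\,h_{B}(\xi)^{2}=E\sup_{v\in B}\lvert\eta(v)\rvert^{2}.
\]
Hence some deterministic ONB of $V_k$ satisfies $\sum_j h_B(e_j)^2\le E\sup_{B}\eta^{2}\le c\,2^{-k}$. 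With this in hand your concatenation argument produces the Hilbert--Schmidt brick exactly as you describe. So the proposal is correct once you replace the hand-wave by this averaging argument; without it, the proof is incomplete at its declared crux.
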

Let $H_{\ve}$ be the number of elements of the minimal $\ve$-net for the compact set $A.$ It is known \cite{10} (Section 10.2, p. 77) that the sufficient condition for the continuity of an isonormal process on the compact set $A$ is the finiteness of Dudley integral, i.e.
$$
\int_{0+}\sqrt{\ln H_{\ve}}d\ve<+\infty,
$$
where the integral is taking over the positive part of some neighborhood of zero.
Consequently, the compact sets which satisfy Dudley condition can be covered by Hilbert-Shmidt brick. Here we prove some useful statements about the sets which can be covered by Hilbert-Shmidt bricks.
\begin{thm}
\label{thm3.2} Let the set $A$ in $H$ can be covered by Hilbert-Shmidt brick. If the set $C$ is bounded and finite-dimensional, then Minkovski sum $A+C$ can be covered by Hilbert-Shmidt brick.
\end{thm}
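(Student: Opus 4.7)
The plan is to exhibit an explicit Hilbert--Schmidt brick covering $A+C$, using the \emph{same} orthonormal basis $\{e_k\}$ as the one that already covers $A$, and only enlarging the coordinate widths by an amount controlled by $\dim F$. Fix a brick $K(\{e_k\},\{\varepsilon_k\})\supset A$ with $\sum_k\varepsilon_k^2<\infty$, let $F\subset H$ be a finite-dimensional subspace containing $C$ with $d=\dim F$, set $M=\sup_{y\in C}\|y\|<\infty$, and write $P_F$ for the orthogonal projection of $H$ onto $F$.

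First, I would bound the coefficients of elements of $C$ in the basis $\{e_k\}$. Since every $y\in C$ lies in $F$, one has $(y,e_k)=(y,P_F e_k)$, and the Cauchy--Schwarz inequality gives $|(y,e_k)|\leq M\|P_F e_k\|$. Consequently every $z=x+y\in A+C$ satisfies
$$
|(z,e_k)|\leq\varepsilon_k+M\|P_F e_k\|=:\varepsilon'_k,
$$
which already shows the inclusion $A+C\subset K(\{e_k\},\{\varepsilon'_k\})$.

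It then remains to verify $\sum_k(\varepsilon'_k)^2<\infty$. Using $(a+b)^2\leq 2a^2+2b^2$,
$$
\sum_k(\varepsilon'_k)^2\leq 2\sum_k\varepsilon_k^2+2M^2\sum_k\|P_F e_k\|^2,
$$
so the question reduces to evaluating $\sum_k\|P_F e_k\|^2$. Expanding $P_F e_k=\sum_{i=1}^d(e_k,f_i)f_i$ in an orthonormal basis $\{f_1,\dots,f_d\}$ of $F$ and swapping the order of summation yields
$$
\sum_k\|P_F e_k\|^2=\sum_{i=1}^d\sum_k(e_k,f_i)^2=\sum_{i=1}^d\|f_i\|^2=d,
$$
i.e.\ the trace identity for a finite-rank orthogonal projection. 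Thus $\sum_k(\varepsilon'_k)^2\leq 2\sum_k\varepsilon_k^2+2M^2d<\infty$, and $K(\{e_k\},\{\varepsilon'_k\})$ is the required Hilbert--Schmidt brick containing $A+C$.

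The main (and essentially only) obstacle is conceptual: one is tempted to pass instead to an orthonormal basis $\{f_1,\dots,f_d,h_1,h_2,\dots\}$ adapted to the splitting $H=F\oplus F^\perp$ and to bound $\sup_{x\in A}|(x,h_j)|$, but that route runs into genuinely nonsummable estimates, since the naive bound $\bigl(\sum_k\varepsilon_k|(e_k,h_j)|\bigr)^2\leq\sum_k\varepsilon_k^2$ is independent of $j$ and therefore useless after summing. Keeping the original basis $\{e_k\}$ and exploiting the finite trace of $P_F$ sidesteps the difficulty entirely.
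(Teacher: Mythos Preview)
Your proof is correct and follows the same core idea as the paper: keep the original basis $\{e_k\}$ and enlarge the widths $\varepsilon_k$ by the coordinate contributions of $C$. The paper actually only writes out the one-dimensional case $C=\{th:t\in[-1,1]\}$, setting $\varepsilon_k'=\varepsilon_k+|\alpha_k|$ where $h=\sum_k\alpha_k e_k$, and leaves the general finite-dimensional case implicit (to be obtained by iterating over a spanning set of segments). Your version is the direct $d$-dimensional analogue, with $M\|P_F e_k\|$ playing the role of $|\alpha_k|$ and the trace identity $\sum_k\|P_F e_k\|^2=d$ replacing $\sum_k|\alpha_k|^2=\|h\|^2$; this handles all dimensions at once and is a cleaner way to present the argument.
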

\begin{proof}
Consider Hilbert-Shmidt brick $K(\{e_k,\ k\geq1\},\{\ve_k,\ k\geq1\})$ and an element $h\in H.$ If
$$
h=\sum^{+\infty}_{k=1}\alpha_k e_k
$$
and
$$
C=\{x=th,\ t\in[-1;1]\}
$$
then
$$
K(\{e_k,\ k\geq1\},\{\ve_k,\ k\geq1\})+C
$$
$$
\subseteq K(\{e_k,\ k\geq1\},\{\ve_k+|\alpha_k|,\ k\geq1\}).
$$
\end{proof}
\begin{cor}
\label{cor3.1}
Let $Q$ be a projection in $H$ with the finite-dimensional kernel and $K:=K(\{e_k,\ k\geq1\},\{\ve_k,\ k\geq1\})$ be  Hilbert-Shmidt brick. Then $Q(K)$ can be covered by Hilbert-Shmidt brick.
\end{cor}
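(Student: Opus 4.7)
The plan is to reduce Corollary \ref{cor3.1} to Theorem \ref{thm3.2} by realising $Q(K)$ as a subset of a Minkowski sum $K+C$ in which $C$ is bounded and finite-dimensional. First I would observe that, since $Q$ is a (bounded) projection, $Q^2=Q$ implies that $I-Q$ is also a bounded projection and that its range coincides with $\ker Q$. By hypothesis $\dim\ker Q<+\infty$, so this range is a finite-dimensional subspace of $H$. Consequently $(I-Q)(K)$, being the image of the compact set $K$ under the bounded operator $I-Q$, is a bounded subset of a finite-dimensional subspace of $H$.

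Next, I would use the elementary identity $Qx=x-(I-Q)x$, valid for every $x\in K$, to obtain the inclusion
$$
Q(K)\subseteq K+C,\qquad C:=\{-(I-Q)x:\ x\in K\}.
$$
Since $C$ is bounded and finite-dimensional by the first step, Theorem \ref{thm3.2} applied with $A=K$ and this $C$ yields a Hilbert-Shmidt brick containing $K+C$, and in particular covering $Q(K)$.

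The only step that requires any care is the use of Theorem \ref{thm3.2} itself, whose proof is recorded only in the $1$-dimensional case $C=\{th:\ t\in[-1;1]\}$. To apply it in the present setting I would pick a basis $h_1,\dots,h_n$ of the linear span of $C$ together with $R>0$ such that $C\subseteq\{\sum_{i=1}^n t_ih_i:\ |t_i|\leq R\}$, expand each $h_i=\sum_k\alpha^{(i)}_k e_k$, and iterate the argument of Theorem \ref{thm3.2} $n$ times, adding the segment $\{t_ih_i:\ |t_i|\leq R\}$ at the $i$-th step. The square summability of each sequence $\{\alpha^{(i)}_k\}_{k\geq 1}$ guarantees that
$$
\ve_k+R\sum_{i=1}^n|\alpha^{(i)}_k|,\qquad k\geq 1,
$$
is still square-summable, and the corresponding Hilbert-Shmidt brick covers $K+C$. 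This bookkeeping is the only routine computation that the argument requires.
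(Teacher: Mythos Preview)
Your argument is correct and is exactly the derivation the paper has in mind: Corollary~\ref{cor3.1} is stated without proof immediately after Theorem~\ref{thm3.2}, and the intended reduction is precisely $Q(K)\subseteq K+\bigl(-(I-Q)(K)\bigr)$ with $-(I-Q)(K)$ bounded and contained in the finite-dimensional subspace $\ker Q$. Your additional remark on iterating the one-dimensional proof of Theorem~\ref{thm3.2} to handle a general finite-dimensional $C$ fills in a detail the paper leaves implicit.
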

As a consequence one can obtain the next statement useful for a consideration of random fields.
\begin{thm}
\label{thm3.3} Suppose that for the function $\rho:\mbR^d\to H$  the isonormal random field $\zeta(u),\ u\in\mbR^d$ (i.e. $\zeta$ is the centered Gaussian field with $E\zeta(u)\zeta(v)=(\rho(u),\rho(v)),\ u,v\in\mbR^d$)
satisfies conditions

1) $\zeta$ has a continuous modification on $\mbR^d$

2) with probability one there exists the limit
$$
\zeta(\infty):=\lim_{\|u\|\to+\infty}\zeta(u).
$$
Then $\rho$ satisfies Condition $(\ast).$
\end{thm}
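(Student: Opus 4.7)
The plan is to verify the hypotheses of Theorem~\ref{thm3.1} for the compact set $A:=\ov{\rho(\mbR^d)}\subset H$ and then read Condition~$(\ast)$ off the resulting Hilbert--Schmidt brick. The latter step is immediate: if $A\subset K(\{e_k\},\{\ve_k\})$, then $\sup_{u\in\mbR^d}|(\rho(u),e_k)|\le\ve_k$ for every $k$, and the summability $\sum_k\ve_k^2<\infty$ is exactly Condition~$(\ast)$.

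First I would promote the hypotheses on $\zeta$ to norm-continuity of $\rho$ in $H$. The identity
$$
\|\rho(u)-\rho(v)\|^2 = E(\zeta(u)-\zeta(v))^2,
$$
together with the fact that an almost-surely convergent sequence of Gaussian random variables also converges in $L_2$ (their variances necessarily converge), shows that $\rho$ is continuous on $\mbR^d$ and admits a norm limit $\rho_\infty\in H$ as $\|u\|\to+\infty$. Setting $\ov\rho(\infty):=\rho_\infty$ produces a continuous extension of $\rho$ to the one-point compactification $\widehat{\mbR^d}:=\mbR^d\cup\{\infty\}$, whose image is the compact set $A$.

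It remains to exhibit a continuous modification of the isonormal process $\eta$ on $A$. The natural candidate is $\tilde\eta(a,\omega):=\tilde\zeta(u,\omega)$ for any $u$ with $\ov\rho(u)=a$, where $\tilde\zeta$ is the given continuous modification of $\zeta$ on $\widehat{\mbR^d}$. Pairwise well-definedness is immediate because $E(\zeta(u_1)-\zeta(u_2))^2=\|\ov\rho(u_1)-\ov\rho(u_2)\|^2=0$ whenever $\ov\rho(u_1)=\ov\rho(u_2)$. The main obstacle is to upgrade this pointwise coincidence to a uniform almost-sure statement; I would take a countable dense subset of the closed set $E:=\{(u_1,u_2)\in(\widehat{\mbR^d})^2:\ov\rho(u_1)=\ov\rho(u_2)\}$, impose the equalities $\tilde\zeta(u_1)=\tilde\zeta(u_2)$ on it off a single null event, and then use the joint path-continuity of $(u_1,u_2)\mapsto\tilde\zeta(u_1)-\tilde\zeta(u_2)$ together with the closedness of $E$ to propagate the equality to all of $E$. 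Once $\tilde\eta$ is unambiguously defined, its continuity on $A$ is purely topological: a continuous surjection from a compact Hausdorff space onto a Hausdorff space is automatically a quotient map, so the $H$-metric topology on $A$ coincides with the quotient topology induced by $\ov\rho$, and continuity of $\tilde\zeta=\tilde\eta\circ\ov\rho$ immediately yields continuity of $\tilde\eta$. Theorem~\ref{thm3.1} then provides the desired Hilbert--Schmidt brick containing $A$, from which Condition~$(\ast)$ follows as explained above.
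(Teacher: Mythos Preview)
Your argument is correct, but it takes a genuinely different route from the paper's. Both proofs begin the same way---using that almost-sure convergence of Gaussians forces $L_2$-convergence to deduce $\rho\in C(\widehat{\mbR^d},H)$ and hence compactness of $A=\rho(\mbR^d)\cup\{\rho(\infty)\}$---and both finish by invoking Theorem~\ref{thm3.1}. The divergence is in how the continuous isonormal process on the target compact is produced. You work directly on $A\subset H$, confronting the possible non-injectivity of $\ov\rho$ head-on: your countable-density argument on the closed set $E\subset(\widehat{\mbR^d})^2$ together with path continuity of $\tilde\zeta$ yields a single null set off which $\tilde\zeta$ factors through $\ov\rho$, and the quotient-map fact (continuous surjection from compact Hausdorff onto Hausdorff) then transports continuity. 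The paper instead sidesteps non-injectivity entirely by passing to the graph: it enlarges the ambient space to $H_1=H\oplus\mbR^{d+1}$, embeds the one-point compactification as a sphere in $\mbR^{d+1}$, and considers $\widetilde K=\{[\rho(u),\tilde u]\}$, on which the isonormal process $\kappa=\zeta+(\tilde u,\xi)$ (with $\xi$ an independent standard Gaussian in $\mbR^{d+1}$) is trivially well-defined and continuous. Theorem~\ref{thm3.1} gives a brick for $\widetilde K$, and then Corollary~\ref{cor3.1} (projections with finite-dimensional kernel preserve brick-coverability) brings the brick down to $K=A$. Your approach is more self-contained---it needs neither Theorem~\ref{thm3.2} nor Corollary~\ref{cor3.1}---at the price of the quotient-topology reasoning; the paper's graph trick trades that reasoning for an auxiliary finite-dimensional embedding and the extra lemma about projections.
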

\begin{proof}
Since for the jointly Gaussian random variables convergence in probability implies convergence in mean square, then it follows from the conditions of the theorem that $\rho\in C(\mbR^d, H)$ and there exists the limit
$$
\rho(\infty):=\lim_{\|u\|\to+\infty}\rho(u).
$$
Consequently, $K=\rho(\mbR^d)\cup \rho(\infty)$ is a compact set in the space $H.$ Now consider the distance $\gamma$ in $\mbR^d$ which is continuous with respect to Euclidian distance and corresponds to the one-point compactification of $\mbR^d$ which is homeomorphic to $d+1$-dimensional sphere. Consider the new Hilbert space $H_1=H\oplus\mbR^{d+1}$ and a compact set
$$
\widetilde{K}=\{[\rho(u),\tilde{u}],\ u\in\mbR^d\}\cup\{[\rho(\infty),\widetilde{\infty}]\},
$$
where $\tilde{u}$ and $\widetilde{\infty}$ are elements of above mentioned compactification. Let us construct an isonormal process on $\widetilde{K}$ as follows. Take a standard Gaussian vector $\xi$ in $\mbR^{d+1}.$ Suppose that $\xi$ and $\zeta$ are independent. Define for $u\in\mbR^d$
$$
\kappa([\rho(u),\tilde{u}]):=\zeta(u)+(\tilde{u},\xi)
$$
and
$$
\kappa([\infty,\widetilde{\infty}]):=\zeta(\infty)+(\widetilde{\infty},\xi),
$$
where the second summands in the previous relations are the scalar products with the vector $\xi$ in $\mbR^{d+1}.$ Suppose that we took a modification of $\zeta$ which is continuous and such that there exists the limit when $\|u\|\to+\infty.$ Now take a sequence $\{x_n,\ n\geq 1\}$ of elements of $\widetilde{K}$ which converges to $x_0\in\widetilde{K}.$ If $x_n=[\rho(u_n),\tilde{u_n}],\ n\geq0,$ then $\tilde{u_n}\to\tilde{u_0},\ n\to+\infty.$ Consequently, $u_n\to u_0,\ n\to+\infty$ and $\rho(u_n)\to\rho(u_0),\ n\to+\infty$ (it can happen that $u_0=+\infty$ or some of $u_n=+\infty$). Hence, $\zeta(u_n)\to\zeta(u_0),\ n\to\infty$ and $(\tilde{u_n},\xi)\to(\tilde{u_0},\xi),\ n\to+\infty.$ Finally, $\kappa(x_n)\to\kappa(x_0),\ n\to+\infty.$ Due to Theorem \ref{thm3.1} the compact set $\widetilde{K}$ in $H$ can be covered by Hilbert-Shmidt brick. Using Corollary \ref{cor3.1} one can conclude that the set $K$ which is the projection of $\widetilde{K}$ on $H$ also can be covered by Hilbert-Shmidt brick. This means that $\rho$ satisfies Condition $(\ast).$ Theorem is proved.
\end{proof}
Let us return to the question of unboundedness of trajectories of the random field $\rho$ which satisfies Condition $(\ast).$ From this moment we take $H=L_2(\Omega, \cF, P).$
\begin{expl}
\label{expl3.1}
Let $\{f_n,\ n\geq1\}$ be a sequence of independent random variables such that $P\{f_n=n^{\frac{1}{6}}\}=\frac{1}{n},\
P\{f_n=0\}=1-\frac{1}{n}.$
Note that $f_n\in L_2(\Omega, \cF, P)$ and
$$
\|f_n\|^2=Ef^{2}_{n}=\frac{1}{n^{\frac{2}{3}}}\to0,\ n\to\infty.
$$
Now it follows from Borel-Cantelli lemma that
$$
\sup_{n\geq 0}f_n=+\infty\ \mbox{a.s.}
$$
Put for $t\in[n; n+1]$
$$
\rho_0(t)=(t-n)f_{n+1}+(n+1-t)f_n
$$
and
$$
\rho_0(+\infty)=0.
$$
Since
$$
\|f_n\|\geq\|f_{n+1}\|,
$$
then one can see that
\begin{equation}
\label{eq3.1}
\|\rho_0(t)\|\leq\|f_{[t]}\|\to0,\ t\to+\infty.
\end{equation}
Note that for $t_1, t_2\in[n; n+1]$
$$
\|\rho_0(t_1)-\rho_0(t_2)\|=
$$
$$
=\|(t_2-t_1)(f_{n+1}-f_n)\|\leq
$$
\begin{equation}
\label{eq3.2}
\leq 2\|f_n\||t_2-t_1|.
\end{equation}
The relation \eqref{eq3.2} implies that the function $\rho_0$ satisfies Lipschitz condition on $[1;+\infty)$. Also, it follows from \eqref{eq3.1} that $A=\rho_0([1;+\infty])$ is a compact set in $L_2(\Omega, \cF, P).$ Let us check that $A$ can be covered by Hilbert-Shmidt brick using Theorem \ref{thm3.3}. Consider the centered Gaussian process $\eta(t),\ t\in[1;+\infty]$ with the covariance function
$$
E\eta(t)\eta(s)=(\rho_0(t), \rho_0(s)).
$$
The process $\eta$ can be defined on another probability space then $(\Omega, \cF, P).$
Since the function $\rho_0$ satisfies Lipschitz condition, then
$$
E(\eta(t_2)-\eta(t_1))^2=\|\rho(t_2)-\rho(t_1)\|^2\leq
$$
\begin{equation}
\label{eq3.3}
\leq c|t_1-t_2|^2,\ c>0.
\end{equation}
It follows from \eqref{eq3.3} that $\eta$ has a continuous modification on $[1;+\infty).$ Let us check that $\eta(t)\to0,$ when $t\to+\infty$ a.s.
Note that
$$
\eta(t)=(\rho_0(t),\xi)=(t-[t])\eta([t]+1)+([t]+1-t)\eta([t]).
$$
Consequently,
$$
|\eta(t)|\leq(t-[t])|\eta([t]+1)|+([t]+1-t)|\eta([t])|
$$
\begin{equation}
\label{eq3.4}
\leq |\eta([t]+1)|\vee |\eta([t])|.
\end{equation}
It implies that it suffices to check that
$$
\eta(n)\to0,\ n\to\infty\ \mbox{a.s.}
$$
 Since
$$
E\eta(n)^2=\frac{1}{n^{2/3}},
$$
then
$$
\sum^\infty_{n=1}E\eta(n)^4<+\infty.
$$
Consequently,
\begin{equation}
\label{eq3.5}
\eta(n)\to0,\ n\to\infty\ \mbox{a.s.}
\end{equation}
Relations \eqref{eq3.4} and \eqref{eq3.5} imply that the process $\eta$ has a continuous modification on $[1;+\infty]$. It follows from Theorem \ref{thm3.3} that $\rho_0([1;+\infty])$ can be covered by Hilbert-Shmidt brick.
\end{expl}
\begin{expl}
\label{expl3.2}
Consider the function on $\mbR^2$ which is defined as
$$
f(u)=\int^{1}_{0}p_t(u)du,\ u\neq0,
$$
$$
f(0)=+\infty,
$$
where $p_t$ is a density of centered Gaussian vector with the covariance matrix $tI.$ It can be easily checked that for some constants $c_i,\ i=\overline{1,4}$
$$
f(u)\leq(c_1\ln(\|u\|^{-1})+c_2)1_{\|u\|\leq1}+c_3e^{-c_4\|u\|^2}1_{\|u\|>1}.
$$
Consequently, the random field
$$
\rho(u)=f(u-\xi),\ u\in\mbR^2,
$$
where $\xi$ is a standard Gaussian vector in $\mbR^2$ is square integrable. Also $\rho$ has unbounded trajectories whith probability one. Note that
$$
E(\rho(u_1)-\rho(u_2))^2=\int_{\mbR^2}(f(u_1-v)-f(u_2-v))^2\rho_1(v)dv
$$
$$
\leq\int_{\mbR^2}(f(u_1-v)-f(u_2-v))^2dv
$$
$$
=\int_{\mbR^2}|\hat{f}(\lambda)|^2|e^{i(u_2-u_1,\lambda)}-1|^2d\lambda
$$
$$
=4\int_{\mbR^2}\frac{(1-e^{-\frac{\|\lambda\|^2}{2}})^2}{\|\lambda\|^4}|e^{i(u_2-u_1,\lambda)}-1|^2d\lambda.
$$
$$
\leq c\int_{\mbR^2}\frac{(1-e^{-\frac{\|\lambda\|^2}{2}})^2}{\|\lambda\|^3}\|u_2-u_1\|d\lambda.
$$
The estimate on $f$ implies that the random field
$$
\rho_1(u)=e^{-\|u\|^2}\rho(u),\ u\in\mbR^2
$$
satisfies the following relation for $\|u\|>1$
$$
E(\rho(u_2)-\rho(u_1))^2
$$
\begin{equation}
\label{eq3.6}
\leq c_1\|u_2-u_1\|e^{-c_2\|u_1\|^2\wedge\|u_2\|^2}.
\end{equation}
Consider a centered Gaussian random field $\zeta(u),\ u\in\mbR^2$
such that
$$
E\zeta(u)\zeta(v)=E\rho_1(u)\rho_1(v),\ u,v\in\mbR^2.
$$
Due to Kolmogorov theorem $\zeta$ has a modification which is continuous on $\mbR^2.$ To check that $\zeta$ has a limit, when $\|u\|\to+\infty$ consider new random field
$$
\zeta_1(u)\zeta\Big(\frac{u}{\|u\|^2}\Big),\ 0<\|u\|\leq1,
$$
$$
\zeta_1(0)=0.
$$
The inequality \eqref{eq3.6} guarantees that for some $c_5$
$$
E(\zeta_1(u_2)-\zeta_1(u_1))\leq c_5\|u_2-u_1\|,\ \|u_1\|,\|u_2\|\leq 1.
$$
Consequently, $\zeta_1$ has a continuous modification. This means that a.s.
$$
\zeta(u)\to0,\ \|u\|\to+\infty.
$$
Hence $\zeta$ satisfies conditions of Theorem \ref{thm3.3} which implies that the random field $\rho_1$ satisfies Condition $(\ast).$
\end{expl}
\section{Deterministic image of Wiener process}
\label{section3}
In this section, using Dynkin construction let us try to construct the self-intersection local time for the  process $F(w(t)),\ t\in[0;1],$ where the deterministic $F:\mbR^2\to\mbR^2$ is the diffeomorphism. Also we will suppose that $F$ and its first derivative are bounded. Formal expression which describes k-multiple self-intersection local time for the precess $F(w)$ is the following
$$
T^{F(w)}_k:=\int_{\Delta_k}\prod^{k-1}_{i=1}\delta_0(F(w(t_{i+1}))-F(w(t_i)))d\vec{t}.
$$
To define an approximating family $T^{F(w)}_{\ve,k},$ we introduce the new delta family of functions related to the new distance generated by the map $F.$ Taking into account that the function $\rho$ in Dynkin construction depends only on the value of the process at the initial point we define the following delta family of functions
$$
f^{F}_{\ve}(v_1,\ldots,v_{k})=\frac{1}{|\det F^{\prime}(F^{-1}(v_1))|^{k-1}}\prod^{k-1}_{i=1}f_{\ve}(F^{-1}(v_{i+1})-F^{-1}(v_{i})).
$$
Let us check that $\{f^{F}_{\ve},\ \ve>0\}$ approximate the delta function.
\begin{lem}
\label{lem1.1}
For any $v_k\in\mbR^2,\ \varphi\in C_{b}(\mbR^{2(k-1)})$
$$
\int_{\mbR^2(k-1)}\varphi(v_1,\ldots,v_{k-1})f^{F}_{\ve}(v_1,\ldots,v_k)d\vec{v}\to \varphi(v_k,\ldots,v_k)
$$
as $\ve\to 0.$
\end{lem}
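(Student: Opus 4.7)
The plan is to perform the change of variables $u_i = F^{-1}(v_i)$ for $i=1,\dots,k-1$ inside the integral, keeping $u_k := F^{-1}(v_k)$ fixed. Since $F$ is a diffeomorphism, $dv_i = |\det F'(u_i)|\,du_i$, and the integral rewrites as
$$
\int_{\mbR^{2(k-1)}} \varphi(F(u_1),\dots,F(u_{k-1}))\, \frac{\prod_{i=1}^{k-1}|\det F'(u_i)|}{|\det F'(u_1)|^{k-1}} \prod_{i=1}^{k-1} f_\ve(u_{i+1}-u_i)\, du_1\cdots du_{k-1}.
$$
The factor depending on $\ve$ is the chain $\prod_{i=1}^{k-1} f_\ve(u_{i+1}-u_i)$, which — viewed as a density in $(u_1,\dots,u_{k-1})$ with $u_k$ fixed — is exactly the joint law of the reversed Gaussian walk $u_{k-j}=u_k+\sqrt{\ve}(\xi_1+\dots+\xi_j)$, $j=1,\dots,k-1$, where $\xi_1,\dots,\xi_{k-1}$ are i.i.d. standard Gaussian in $\mbR^2$.

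Reading the integral as an expectation over this walk, one has $u_i\to u_k$ almost surely for every $i$ as $\ve\to 0$. Since $\varphi$ and $\det F'$ are continuous and $|\det F'(u_k)|>0$, the integrand converges pointwise to
$$
\varphi(F(u_k),\dots,F(u_k))\cdot\frac{|\det F'(u_k)|^{k-1}}{|\det F'(u_k)|^{k-1}}=\varphi(v_k,\dots,v_k),
$$
so the only remaining task is to justify passing to the limit under the expectation.

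This interchange is the one step that is not automatic: although $\varphi$ is bounded and $F'$ is bounded above, the factor $|\det F'(u_1)|^{-(k-1)}$ has no a priori upper bound on $\mbR^2$. I would deal with it by localization: choose $\delta>0$ small enough that $|\det F'(u)|\ge \tfrac12|\det F'(u_k)|$ on the ball $B_\delta(u_k)$, and split the expectation by the event $\{u_1\in B_\delta(u_k)\}$. On this event the integrand admits the deterministic bound $\|\varphi\|_\infty\bigl(2\|F'\|_\infty^2/|\det F'(u_k)|\bigr)^{k-1}$, and a.s. convergence combined with bounded convergence delivers exactly $\varphi(v_k,\dots,v_k)$. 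On the complementary event, the Gaussian concentration of $\sqrt{\ve}(\xi_1+\dots+\xi_{k-1})$ yields probability bounded by $e^{-c\delta^2/\ve}$, which comfortably dominates any fixed-order singularity of $|\det F'|^{-(k-1)}$ encountered on that Gaussian tail after an explicit change of variables $u_1 = u_k+\sqrt{\ve}y$; hence the tail contribution vanishes as $\ve\to 0$. Combining the two pieces proves the lemma.
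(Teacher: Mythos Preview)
Your argument is essentially the paper's: after the substitution $u_i=F^{-1}(v_i)$ both you and the paper recognize $\prod_{i} f_\ve(u_{i+1}-u_i)$ as an approximate identity concentrating all the $u_i$ at $u_k=F^{-1}(v_k)$. The paper carries this out via a second explicit substitution ($z_{k-1}=u_{k-1}$ and $z_i=u_{i+1}-u_i$ for $i\le k-2$) and then simply \emph{asserts} the limit, whereas you phrase the same reduction probabilistically as a Gaussian walk and add a localization step to justify passing to the limit. That extra care already goes beyond what the paper supplies.

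One caution on your tail step: you appeal to a ``fixed-order singularity'' of $|\det F'(u_1)|^{-(k-1)}$ being beaten by the Gaussian factor $e^{-c\delta^2/\ve}$, but the hypotheses actually stated in the paper ($F$ a diffeomorphism with $F$ and $F'$ bounded) do not give any global lower bound on $|\det F'|$, so this sentence is not fully grounded as written. The paper does not address this point at all; and since a few lines later it applies Dynkin's theorem with the \emph{bounded} weight $\rho=|\det F'|^{-(k-1)}$, it is evidently working under the tacit assumption $\inf_{u}|\det F'(u)|>0$. Under that assumption the whole integrand is uniformly bounded, plain bounded convergence already does the job, and your localization is correct but unnecessary.
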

\begin{proof}
Put $F^{-1}(v_{i}))=u_i,\ i=\overline{i,k-1}.$ Then
$$
\int_{\mbR^2(k-1)}\varphi(v_1,\ldots,v_{k-1})f^{F}_{\ve}(v_1,\ldots,v_k)d\vec{v}=
$$
$$
\int_{\mbR^2(k-1)}\varphi(F(u_1),\ldots,F(u_{k-1}))\frac{\prod^{k-1}_{i=2}|\det F^{\prime}(u_i))|}{|\det F^{\prime}(u_1))|^{k-1}}
$$
\begin{equation}
\label{eq1.2}
\prod^{k-2}_{i=1}f_{\ve}(u_{i+1}-u_i)f_{\ve}(F^{-1}(v_k)-u_{k-1})d\vec{u}
\end{equation}
Changing the variables in the integrand
$$
u_{k-1}=z_{k-1},\ u_{k-1}-u_{k-2}=z_{k-1},\ldots,u_2-u_1=z_1
$$
one can obtain that \eqref{eq1.2} equals
$$
\int_{\mbR^2(k-1)}\varphi(F(z_{k-1}-z_{k-2}-\dots-z_1),\ldots,F(z_{k-1}-z_{k-2}))
$$
$$
\frac{\prod^{k-1}_{i=2}|\det F^{\prime}(z_{k-1}-z_{k-2}-\ldots-z_{i})|}{|\det F^{\prime}(z_{k-1}-z_{k-2})|^{k-1}}\prod^{k-2}_{i=1}f_{\ve}(z_i)
$$
$$
f_{\ve}(z_{k-1}-F^{-1}(v_k))d\vec{z}\to\varphi(v_k,\ldots,v_k)
$$
as $\ve\to0.$ Lemma is proved.
\end{proof}
It follows from Lemma \ref{lem1.1} that the approximating family for $T^{F(w)}_{\ve,k}$  can have the following representation
$$
T^{F(w)}_{\ve,k}:=\int_{\Delta_k}f^{F}_{\ve}(F(w(t_{1})),\ldots,F(w(t_{k}))d\vec{t}=
$$
$$
=\int_{\Delta_k}\frac{1}{|\det F^{\prime}(w(t_1))|^{k-1}}\prod^{k-1}_{i=1}f_{\ve}(w(t_{i+1})-w(t_{i}))d\vec{t}=
$$
$$
=\int_{\Delta_k}\rho(w(t_1)\prod^{k-1}_{i=1}f_{\ve}(w(t_{i+1})-w(t_{i}))d\vec{t},
$$
where $\rho(u)=\frac{1}{|\det F^{\prime}(u)|^{k-1}}.$
Then Dynkin renormalization is the following
$$
\cT^{F(w)}_{\ve, k}=\sum^k_{l=1}C^{l-1}_{k-1}
\left(
\frac{1}{2\pi}\ln\ve\right)^{k-l}
\int_{\Delta_l}\Big[\rho(w(t_1))\prod^{l-1}_{i=1}
f_\ve(w(t_{i+1})-w(t_i))\Big]d\vec{t}
$$
and the following statement holds.
\begin{thm}
\label{thm4.1}
For any $p\in\mbN$ there exists
$\cT^{F(w)}_{k}:=L_p\mbox{-}\lim_{\ve\to0}\cT^{F(w)}_{\ve, k}.$
\end{thm}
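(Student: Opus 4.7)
The plan is to reduce Theorem \ref{thm4.1} to a direct application of Dynkin's Theorem \ref{thm1.1}. The algebraic manipulation carried out just before the statement has already produced the identity
$$
\cT^{F(w)}_{\ve,k} \;=\; \cT^{w}_{\ve,k}(\rho), \qquad \rho(u) \;=\; \frac{1}{|\det F^{\prime}(u)|^{k-1}},
$$
so the whole task collapses to verifying that this particular $\rho$ is an admissible weight in the sense of Theorem \ref{thm1.1}, i.e. bounded and measurable.

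First I would record that $\rho$ is measurable, which is immediate from the continuity of $u\mapsto F^{\prime}(u)$. Next I would verify $\rho\in L_\infty(\mbR^2)$. Since $F:\mbR^2\to\mbR^2$ is a diffeomorphism, $\det F^{\prime}(u)$ never vanishes and $(F^{-1})^{\prime}(F(u))=(F^{\prime}(u))^{-1}$; the standing hypothesis that $F$ and $F^{\prime}$ are bounded, combined with the corresponding bound on $(F^{-1})^{\prime}$ implicit in the diffeomorphism assumption, forces $|\det F^{\prime}(u)|$ to be bounded away from $0$ uniformly in $u\in\mbR^2$. Hence $\rho$ is bounded.

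Once these two properties are in hand, Theorem \ref{thm1.1} applies directly to the weight $\rho$ and yields an $L_p$-limit $\cT^{w}_k(\rho)$ of $\cT^{w}_{\ve,k}(\rho)$ for every $p>0$, in particular every $p\in\mbN$. Setting $\cT^{F(w)}_k:=\cT^{w}_k(\rho)$ then produces the required random variable and closes the argument.

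The main (and essentially only) obstacle is the verification of the boundedness of $\rho$: everything else is a direct citation of Theorem \ref{thm1.1}. If the stated hypotheses on $F$ were found to be too weak to guarantee $|\det F^{\prime}|$ bounded away from zero, I would either strengthen them by explicitly requiring $(F^{-1})^{\prime}$ bounded, or, alternatively, bypass Theorem \ref{thm1.1} altogether by regarding $\rho$ as a (possibly unbounded) Hilbert-valued weight and invoking the machinery of Section 2 (Condition $(\ast)$ and Theorem \ref{thm2.1}) after checking, via Theorem \ref{thm3.3}, that the range of $\rho$ can be covered by a Hilbert--Shmidt brick.
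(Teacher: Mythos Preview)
Your proposal is correct and coincides with the paper's own argument: the paper gives no separate proof after the statement, the ``proof'' being precisely the reduction carried out just before it, namely the identity $\cT^{F(w)}_{\ve,k}=\cT^{w}_{\ve,k}(\rho)$ with $\rho(u)=|\det F'(u)|^{-(k-1)}$, followed by an implicit appeal to Theorem~\ref{thm1.1}. Your explicit check that $\rho$ is bounded and measurable, and your remark about the need for $|\det F'|$ to be bounded away from zero, simply make precise what the paper leaves unsaid.
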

\par\bigskip\noindent
{\bf Acknowledgment.} The authors acknowledge financial support from the Deutsche Forschungsgemeinschaft (DFG) within the project "Stochastic Calculus and Geometry of Stochastic Flows with Singular Interaction" for initiation of international collaboration between the Institute of Mathematics of the Friedrich-Schiller University Jena (Germany) and the Institute of Mathematics of the National Academy of Sciences of Ukraine, Kiev.


\begin{thebibliography}{99}
\bibitem{1}
E.B.Dynkin, Regularized self-intersection local times of planar Brownian motion, The Annals of Probability, 16 (1988), no. 1, 58-74.
\bibitem{12}
J.-F. Le Gall, Wiener Sausage and self-intersection local times, J. Funct. Anal., 88 (1990), no. 2, 299-341.
\bibitem{13}
A.S. Sznitman, Some bounds and limiting results for the measure of Wiener Sausage of small radius associated with elliptic diffusions, Stochastic processes and their applications, 25 (1987), 1-25.
\bibitem{14}
O.L. Izyumtseva, The constant of renormalization for self-intersection local time of diffusion process in the plane, Ukr. Math. J., 60 (2008), no. 11, 1489-1498.
\bibitem{5}
A.Dorogovtsev, M.Popov, Geometric entropy in Banach spaces, Theory of Stochastic Processes 19 (35)(2014), no.2, 10-30.
\bibitem{6}
N.Danford, J.T.Shwartz, Linear Operators, Part 1: General theory (Vol 1), Wiley-Interscience, 1988.
\bibitem{10}
M.A. Lifshits, Lectures on Gaussian processes, Springer, Heidelberg-Dordrecht-London-New York, 2012.
\end{thebibliography}
\end{document}